\newcommand{\Q}{\mathbb{Q}}
\newcommand{\cO}{\mathcal{O}}
\newcommand{\fkp}{\mathfrak{p}}
\newcommand{\ve}{\varepsilon}
\renewcommand{\leq}{\leqslant}
\renewcommand{\geq}{\geqslant}
\newcommand{\rr}{\rightarrow}
\newcommand{\of}[1]{\left(#1\right)}
\newcommand{\set}[1]{\left\{#1\right\}}
\newcommand{\abs}[1]{\left\vert#1\right\vert}
\newcommand{\smat}[1]{\begin{smallmatrix}#1\end{smallmatrix}}
\newcommand{\on}{\operatorname}
\renewcommand{\Re}{\on{Re}}
\newcommand{\Gal}{\on{Gal}}
\begin{document}

\markboth{Biao Wang}{An analogue of a formula for Chebotarev Densities}

\title{An analogue of a formula for Chebotarev Densities}

\author{Biao Wang}

\address{Department of Mathematics\\
	University at Buffalo, The State University of New York\\
 	Buffalo, NY 14260, USA\\
\email{bwang32@buffalo.edu} }

\maketitle

\begin{abstract}
	In this short note, we show an analogue of Dawsey's formula on Chebotarev densities for finite Galois extensions of $\Q$ with respect to the Riemann zeta function $\zeta(ms)$ for any integer $m\geq2$. Her formula may be viewed as the limit version of ours as $m\rr\infty$. 
\end{abstract}

\keywords{Largest prime divisor; smallest prime divisor; duality; prime number theorem; Chebotarev density.}

\ccode{Mathematics Subject Classification 2010: 11N13, 11R45}

\section{Introduction and statement of results}

Let $\zeta(s)=\sum_{n=1}^\infty\frac1{n^s}$ for $\Re s>1$ be the Riemann zeta function, and let $\mu(n)$ be the M\"{o}bius function defined by $\mu(n)=(-1)^k$ if $n$ is the product of $k$ distinct primes and zero otherwise. 
It is well-known (e.g.,  \cite[(4.5)]{d82}) that the prime number theorem is equivalent to the assertion that 
\begin{equation}\label{pntmu}
\sum_{n=1}^\infty\frac{\mu(n)}{n}=0
\end{equation}
or equivalently,
\begin{equation}
-\sum_{n=2}^\infty\frac{\mu(n)}{n}=1.
\end{equation}

Let $p(n)$ be the smallest prime divisor of $n$ and let $\varphi$ be the Euler totient function.  Let $k\geq 1$, $\ell$ be integers and $(\ell,k)=1$. In 1977, Alladi \cite{a77} proved that 
\begin{equation}\label{alladi}
-\sum_{\smat{n\geq 2\\ p(n)\equiv \ell (\on{mod}k)}}\frac{\mu(n)}{n}=\frac1{\varphi(k)}.
\end{equation}

In 2017, Dawsey \cite{d17} generalized formula (\ref{alladi}) to the setting of Chebotarev densities for finite Galois extensions
of $\Q$. That is, for any conjugacy class $C$ in the Galois group $G = \Gal(K/\Q)$ of a finite  Galois extension $K$ of $\Q$, we have
\begin{equation}\label{dawsey}
-\sum_{\smat{n\geq 2\\ \left[\frac{K/\Q}{p(n)}\right]=C}}\frac{\mu(n)}{n}=\frac{|C|}{|G|},
\end{equation}
where  
$$\left[\frac{K/\Q}{p}\right]:=\set{\left[\frac{K/\Q}{\fkp}\right]: \fkp \subseteq\cO_K  \text{ and } \fkp|p}$$ 
for an unramified prime $p$, and $\left[\frac{K/\Q}{\fkp}\right]$ is the Artin symbol for the Frobenius map. Here $\cO_K$ denotes the ring of integers in $K$, and $\fkp$ denotes a prime ideal in $\cO_K$.

Alladi's result (\ref{alladi}) is the special case of (\ref{dawsey}) when $K=\Q(\zeta_k)$ and $C$ is the conjugacy class of $\ell$, where $\zeta_k$ is a primitive $k$-th root of unity.

In this note, we give an analogue of Alladi's and Dawsey's results relating to $\zeta(ms)$ for any integer $m\geq2$. Let $\lambda_m(n)$ be the function defined as the coefficient of term $\frac{1}{n^s}$ in the Dirichlet series expansion of $\frac{\zeta(ms)}{\zeta(s)}$ for $\Re s>1$. That is,
\begin{equation}\label{lddfn}
\sum_{n=1}^\infty \frac{\lambda_m(n)}{n^s}=\frac{\zeta(ms)}{\zeta(s)}
\end{equation}
for $\Re s>1$. 
 When $m=2$, $\lambda_2(n)=(-1)^{\Omega(n)}$ is the Liouville function (e.g., \cite[Theorem 300]{hw08}), where $\Omega(n)=\sum_{p^\alpha||n}\alpha$. Hence $\lambda_m(n)$ is a generalization of the Liouville function. In section \ref{pntld}, we will see that 
$
 	\lambda_m(n)=\sum_{d^m|n}\mu\big(\frac{n}{d^m}\big),
$
 and  the prime number theorem is equivalent to  the assertion that 
\begin{equation}\label{pntlf}
\sum_{n=1}^\infty\frac{\lambda_m(n)}{n}=0.
\end{equation}

Analogous to Alladi's formula (\ref{alladi}), for $(\ell,k)=1$ we have that
\begin{equation}\label{classical}
-\sum_{\smat{n\geq 2\\ p(n)\equiv \ell (\on{mod}k)}}\frac{\lambda_m(n)}{n}=\frac1{\varphi(k)}.
\end{equation}
As \cite{d17}, Eq. (\ref{classical}) can be thought of as a special case in the following main theorem.

\begin{theorem}\label{mainthm}
	Let $K$ be a finite Galois extension of $\Q$ with Galois group $G = \Gal(K/\Q)$. Then for any conjugacy class $C\subseteq G$, we have  
	\begin{equation}\label{maineq}
	-\sum_{\smat{n\geq 2\\ \left[\frac{K/\Q}{p(n)}\right]=C}}\frac{\lambda_m(n)}{n}=\frac{|C|}{|G|}.
	\end{equation}
\end{theorem}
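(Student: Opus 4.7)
The plan is to follow Dawsey's strategy in \cite{d17}, reducing Theorem \ref{mainthm} via character orthogonality to a one-character statement, and then analyzing the resulting Dirichlet series.

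By the second orthogonality relation for the irreducible complex characters $\chi$ of $G$, the indicator of the conjugacy class $C$ decomposes as $\mathbf{1}_{\{g\in C\}}=\frac{|C|}{|G|}\sum_\chi\overline{\chi(C)}\chi(g)$. Substituting $g=\left[\frac{K/\Q}{p(n)}\right]$ into the left-hand side of \eqref{maineq} reduces the theorem to the claim that
\[
\Sigma_\chi:=-\sum_{n\geq 2}\frac{\lambda_m(n)\,\chi\of{\left[\frac{K/\Q}{p(n)}\right]}}{n}
\]
equals $1$ for the trivial character $\chi_0$ and $0$ for every non-trivial irreducible character $\chi$. The trivial case is immediate from \eqref{pntlf}, since $\chi_0$ takes value $1$ on every unramified Frobenius.

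For a non-trivial $\chi$, I would use the multiplicativity of $\lambda_m$, whose Euler factor at a prime $p$ is $E_p(s)=(1-p^{-s})/(1-p^{-ms})$, together with the unique factorization $n=p^ak$ where $p=p(n)$, $a\geq 1$, and $p(k)>p$. For $\Re s>1$ this yields
\[
\Sigma_\chi(s)=-\sum_{p}\chi\of{\left[\tfrac{K/\Q}{p}\right]}\big(E_p(s)-1\big)\prod_{q>p}E_q(s).
\]
Pulling out $\zeta(ms)/\zeta(s)=\prod_q E_q(s)$ and using the identity $1-E_p(s)^{-1}=-p^{-s}(1-p^{-(m-1)s})/(1-p^{-s})$, I would rewrite this as
\[
\Sigma_\chi(s)=\frac{\zeta(ms)}{\zeta(s)}\sum_p\chi\of{\left[\tfrac{K/\Q}{p}\right]}\frac{p^{-s}(1-p^{-(m-1)s})}{(1-p^{-s})\prod_{q<p}E_q(s)}.
\]
By Mertens' theorem, $\prod_{q<p}E_q(1)^{-1}\asymp\log p$ for $m\geq 2$, so at $s=1$ the inner sum is comparable to $\sum_p\chi\of{\left[\tfrac{K/\Q}{p}\right]}(\log p)/p$, which is $O(1)$ by partial summation and the Chebotarev density theorem (as $\sum_D\frac{|D|}{|G|}\chi(D)=0$ for non-trivial $\chi$). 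Combined with $\zeta(ms)/\zeta(s)=O(s-1)$ as $s\to 1^+$, this yields $\Sigma_\chi=\lim_{s\to 1^+}\Sigma_\chi(s)=0$.

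The main obstacle will be the rigorous justification of the limit $s\to 1^+$, which ultimately rests on the non-vanishing of the Artin $L$-function $L(s,\chi,K/\Q)$ at $s=1$---precisely the analytic input underlying Dawsey's proof of \eqref{dawsey}. Once this is in hand, summing over $\chi$ isolates the $\chi_0$ contribution, giving
\[
-\sum_{\substack{n\geq 2\\ \left[\frac{K/\Q}{p(n)}\right]=C}}\frac{\lambda_m(n)}{n}=\frac{|C|}{|G|}\cdot\overline{\chi_0(C)}\cdot 1=\frac{|C|}{|G|}.
\]
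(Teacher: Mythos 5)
Your character-orthogonality reduction and the Euler-product computation of $\Sigma_\chi(s)$ for $\Re s>1$ are both correct, and this is a genuinely different route from the paper's: the paper never introduces characters or Artin $L$-functions, but instead uses the Duality Lemma \ref{duality} (namely $\sum_{d|n}\lambda_m(d)f(p(d))=-f(P_m(n))$), M\"obius inversion, and the Ivi\'c--Pomerance bound on $\#\{n\leq x:P(n)^2\mid n\}$ to show that the $\lambda_m$-weighted partial sums differ from Dawsey's $\mu$-weighted partial sums by $o(1)$, and then quotes Dawsey's effective result as a black box. The gap in your argument lies at its central step: what you actually compute is the Abel limit $\lim_{s\to1^+}\Sigma_\chi(s)$, whereas the theorem asserts the value of the series $-\sum_{n\geq2}\lambda_m(n)\chi\big(\big[\tfrac{K/\Q}{p(n)}\big]\big)/n$ summed in order of increasing $n$. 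Abel's theorem for Dirichlet series identifies these two quantities only \emph{after} one knows the series converges at $s=1$, and that convergence is the entire content of the theorem --- just as $\sum_n\mu(n)/n=0$ does not follow from $1/\zeta(s)\to0$ as $s\to1^+$ but requires the prime number theorem as a Tauberian input. Your rearrangement of the series by smallest prime factor is likewise justified only in the region of absolute convergence $\Re s>1$.

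The remedy you point to, non-vanishing of $L(1,\chi,K/\Q)$, is not sufficient by itself: it controls sums over primes such as $\sum_p\chi\big(\big[\tfrac{K/\Q}{p}\big]\big)\log p/p$, but passing from these to the convergence of a sum over all integers ordered by size requires either a Tauberian theorem or, as in Alladi, Dawsey, and the present paper, the duality between $p(n)$ and a largest-prime-factor statistic combined with an effective (zero-free-region) form of the Chebotarev density theorem; this mechanism is exactly what your proposal omits. As written, the argument establishes the Abel-summed version of (\ref{maineq}) but not (\ref{maineq}) itself. A secondary point: the finitely many ramified primes must be excluded before applying orthogonality, and one then needs $\sum_{n:\,p(n)=p}\lambda_m(n)/n=0$ for each such $p$ --- a fact that again comes from the duality/partial-sum analysis rather than from the Euler product at $s=1$.
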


\begin{remark}
	Since $\lim\limits_{m\rr\infty}\zeta(ms)=1$ for $s>1$, we have $\lim\limits_{m\rr\infty}\lambda_m(n)=\mu(n).$ Hence Alladi's and Dawsey's results may be viewed as the limit version of  (\ref{classical}) and (\ref{maineq}), respectively.
\end{remark}
\begin{remark}
	In 2019, Sweeting and Woo \cite{sw19}  generalized  (\ref{dawsey}) to finite Galois extensions
	of number fields. One may also generalize  (\ref{maineq}) to number fields.
\end{remark}

For the proof of Theorem \ref{mainthm}, we shall use a prime divisor function $P_m(n)$ which will be defined in section \ref{dualsec} to estimate the difference between the partial sums of  (\ref{dawsey}) and (\ref{maineq}). As a result, $P_m(n)$ is very close to the largest prime divisor function $P(n)$ and satisfies Alladi's duality property. Then we apply Dawsey's result in \cite{d17}.

\section{Some properties of $\lambda_m(n)$}\label{pntld}

In this section, we mainly introduce the relation between $\lambda_m$ and $\mu$ and prove the prime number theorem with respect to $\lambda_m$. 

\begin{lemma} \label{ldlemma}
	Let $m\geq2$ be a fixed integer. For the $\lambda_m$ defined by (\ref{lddfn}), we have
	\begin{enumerate}
		\item $\lambda_m$ is a multiplicative function.
		
		\item $\lambda_m(n)=\sum_{d^m|n}\mu\big(\frac{n}{d^m}\big).$
		
		\item For any integer $n\geq1$, we can write it as $n=k^m\cdot l$ for $k,l\geq1$ and $l$ is $m$-th power-free (i.e., it has no $m$-th power divisor except 1). Then $\lambda_m(n)=\mu(l)$.
		
		\item $\mu(n)=\mu^2(n)\lambda_m(n)$ for all integers $n\geq1$.
	\end{enumerate}	
\end{lemma}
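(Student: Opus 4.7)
The plan is to derive all four items from the Euler product
\begin{equation*}
\frac{\zeta(ms)}{\zeta(s)} = \prod_{p} \frac{1 - p^{-s}}{1 - p^{-ms}},
\end{equation*}
whose local factors determine $\lambda_m$ on prime powers. Part (1) is then immediate from the standard correspondence between Euler products and multiplicative Dirichlet-series coefficients; alternatively, it will drop out of Part (2) by observing that $\lambda_m$ is the Dirichlet convolution of two multiplicative functions.

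For Part (2), I would rewrite $\zeta(ms) = \sum_{d=1}^\infty d^{-ms} = \sum_{n=1}^\infty g_m(n) n^{-s}$, where $g_m(n)=1$ if $n$ is a perfect $m$-th power and $0$ otherwise. Multiplying by $1/\zeta(s) = \sum_n \mu(n)/n^s$ (both absolutely convergent for $\Re s > 1$) and reading off the coefficient of $n^{-s}$ in the resulting Dirichlet convolution yields the claimed identity $\lambda_m(n) = \sum_{d^m | n} \mu(n/d^m)$.

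For Part (3), the cleanest route is via multiplicativity from Part (1): it suffices to compute $\lambda_m(p^a)$, and the local factor $(1-x)/(1-x^m) = (1-x)\sum_{j\geq 0} x^{jm}$ (with $x = p^{-s}$) yields $\lambda_m(p^a) = 1$, $-1$, or $0$ according as $a \equiv 0$, $a\equiv 1$, or $a\not\equiv 0,1 \pmod m$. Writing $a = qm + r$ with $0\leq r < m$, so that $p^a = (p^q)^m \cdot p^r$, this equals $\mu(p^r)$; extending by multiplicativity to $n = k^m l$ with $l$ being $m$-th power-free gives $\lambda_m(n) = \mu(l)$. Part (4) then follows at once: if $n$ is squarefree, then $k=1$ and $l=n$ in the decomposition of Part (3), so $\mu^2(n)\lambda_m(n) = \lambda_m(n) = \mu(n)$; if $n$ has a square factor, then $\mu(n) = \mu^2(n) = 0$ and both sides vanish.

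The only delicate step is the bookkeeping in Part (3), namely checking that the residue $r$ of the $p$-adic valuation modulo $m$ coincides with the exponent of $p$ in the $m$-th power-free factor $l$ and that $\mu$ factors across the prime decomposition of $l$. This is a one-line verification, and no real analytic or combinatorial obstacle arises in any of the four parts.
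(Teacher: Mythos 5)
Your proposal is correct and follows essentially the same route as the paper: both identify $\lambda_m$ as the Dirichlet convolution of $\mu$ with the indicator of perfect $m$-th powers, deduce (1) and (2) from that, reduce (3) to prime powers by multiplicativity, and get (4) from the squarefree case. The only cosmetic difference is that you evaluate $\lambda_m(p^a)$ from the local Euler factor $(1-x)/(1-x^m)$, whereas the paper evaluates the sum in (2) directly at $n=p^\alpha$; the two computations are interchangeable.
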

\begin{proof} Set
	\begin{equation}\label{andfn}
		a(n):=\begin{cases}
			1,& \text{if } n=d^m \text{ for some integer } d\geq1;\\
			0, & \text{otherwise}.
		\end{cases}	
	\end{equation}
	Then $a(n)$ is multiplicative and $\sum_{n=1}^\infty\frac{a(n)}{n^s}=\zeta(ms)$ for $\Re s>1$. 
		\begin{enumerate}
\item It is well known (e.g. \cite[Corollary 11.3]{fkl18}) that $\frac1{\zeta(s)}=\sum_{n=1}^\infty\frac{\mu(n)}{n^s}$ for $\Re s>1$. By (\ref{lddfn}), the definition  of $\lambda_m(n)$,  for $\Re s>1$ we have
	\begin{equation}\label{ldeq-1}
		\sum_{n=1}^\infty \frac{\lambda_m(n)}{n^s}=\sum_{n=1}^\infty\frac{a(n)}{n^s}\sum_{n=1}^\infty\frac{\mu(n)}{n^s}.
	\end{equation}
 It follows that $\lambda_m=a*\mu$ is the Dirichlet convolution of $a$ and $\mu$, which are both multiplicative functions. Hence $\lambda_m$ is multiplicative. 
	
\item	Since $\lambda_m=a*\mu$, we have
\begin{equation}\label{ldeq-2}
	\lambda_m(n)=\sum_{d|n}a(d)\mu\big(\frac{n}{d}\big).
\end{equation}
Plugging (\ref{andfn}) into (\ref{ldeq-2}), we get part (2).

\item Since $\lambda_m$ is multiplicative, it suffices to consider the prime powers. Suppose $n=p^\alpha$, $\alpha\geq1$. Write $\alpha$ as $\alpha=m\beta+r$ with integers $\beta\geq0$ and $0\leq r<m$. Then $p^\alpha=(p^\beta)^m\cdot p^r$ and we can use part (2) to compute $\lambda_m(p^\alpha)$ as follows:
$$
	\lambda_m(p^\alpha)=\sum_{d^m|p^\alpha}\mu\big(\frac{p^\alpha}{d^m}\big)=\sum_{j=0}^\beta \mu\big(\frac{p^\alpha}{p^{mj}}\big)=\sum_{j=0}^\beta \mu(p^{m(\beta-j)+r})=\mu(p^r).
$$

\item 
	
	By part (3), $\lambda_m(n)=\mu(n)$ if $n$ is square-free. Then part (4) follows immediately by the fact that $\mu$ is supported on square-free numbers.
		\end{enumerate}	
\end{proof}
\begin{remark}
	 Due to Lemma \ref{ldlemma}(2), analogous to the M\"{o}bius function $\mu(n)$, the Riemann hypothesis is equivalent to the estimate that for all $\epsilon>0$ we have
	\begin{equation}\label{rhlf}
		\sum_{n\leq x}\lambda_m(x)=O(x^{\frac12+\epsilon})
	\end{equation}
	where the implied constant depends on $\epsilon$, see  \cite[Theorem 4.16, 4.18]{b17}. 
\end{remark}
\begin{remark}
	Sarnak’s conjecture with respect to $\mu$ is equivalent to Sarnak’s conjecture with respect to $\lambda_m$ due to Lemma \ref{ldlemma}(2) and (4), see \cite[Corollary 11.25]{fkl18}.
\end{remark}

\begin{lemma}\label{pntlem} 
	The prime number theorem is equivalent to  the assertion that 
	\begin{equation}\label{pntlemeq0}
		\sum_{n=1}^\infty\frac{\lambda_m(n)}{n}=0.
	\end{equation}
\end{lemma}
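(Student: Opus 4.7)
The plan is to reduce Lemma \ref{pntlem} to the equivalence of PNT with (\ref{pntmu}) by relating the partial sums $M(x) := \sum_{n \leq x} \mu(n)/n$ and $L(x) := \sum_{n \leq x} \lambda_m(n)/n$. Since PNT is equivalent to $M(x) \rr 0$, it suffices to prove that $M(x) \rr 0$ if and only if $L(x) \rr 0$, which I will do by expressing each of $L$ and $M$ as a linear combination of shifted copies of the other and then applying a dominated-convergence type splitting.

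The key identities come from Lemma \ref{ldlemma}(2) and its dual. Substituting $n = d^m k$ in $\lambda_m(n) = \sum_{d^m \mid n} \mu(n/d^m)$ and exchanging the order of summation gives
$$L(x) \;=\; \sum_{d \leq x^{1/m}} \frac{1}{d^m}\, M\!\left(\frac{x}{d^m}\right).$$
Dually, the Dirichlet series factorization $1/\zeta(s) = \zeta(ms)^{-1}\cdot \zeta(ms)/\zeta(s)$ yields $\mu(n) = \sum_{e^m \mid n} \mu(e)\,\lambda_m(n/e^m)$, which rearranges in the same way to
$$M(x) \;=\; \sum_{e \leq x^{1/m}} \frac{\mu(e)}{e^m}\, L\!\left(\frac{x}{e^m}\right).$$

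For the forward direction, assuming $M(x) \rr 0$ gives $|M(x)| \leq C$ for some constant $C$. Given $\ve > 0$, I will choose $D$ such that $C \sum_{d > D} d^{-m} < \ve$ and then take $x$ large enough that $|M(x/d^m)| < \ve$ for every $d \leq D$; splitting the first identity at $D$ then yields $|L(x)| \leq \ve\,\zeta(m) + \ve$, so $L(x) \rr 0$. The converse is symmetric, applying the same splitting to the second identity (boundedness of $L$ is immediate from its convergence). The only step that genuinely uses $m \geq 2$ is the convergence of $\zeta(m)$, without which the splitting would fail; I expect no other substantive obstacle, as the Dirichlet series manipulation producing the dual convolution is routine and the splitting argument is standard.
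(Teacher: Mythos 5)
Your proposal is correct and follows essentially the same route as the paper: both reduce to the known equivalence of the PNT with $\sum_n\mu(n)/n=0$, use the two convolution identities $\lambda_m=a*\mu$ and $\mu(n)=\sum_{d^m\mid n}\mu(d)\lambda_m(n/d^m)$, and run a splitting argument on the resulting partial-sum identities. The only (immaterial) difference is that you cut the sum at a fixed $D=D(\ve)$ while the paper cuts at $d^m\leq x^{1/2}$; both hinge on the convergence of $\sum d^{-m}$ for $m\geq2$.
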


\begin{proof} Since the prime number theorem is equivalent to (\ref{pntmu}), it suffices to prove that (\ref{pntlemeq0}) is equivalent to (\ref{pntmu}).
	
	First, assume that (\ref{pntmu}) holds. Let 
	$$A(x):=\sum_{n\leq x}\frac{\mu(n)}{n},$$
then $A(x)=o(1)$. By Lemma \ref{ldlemma}(2), we can divide the partial sum of (\ref{pntlemeq0}) into two parts:
	\begin{align}\label{pntlemeq}
		\sum_{n\leq x}\frac{\lambda_m(n)}{n}&=\sum_{n\leq x}\frac1n\sum_{d^me=n}\mu(e)=\sum_{d^m\leq x}\frac1{d^m}A\of{\frac{x}{d^m}}\nonumber\\	
		&=\sum_{d^m\leq x^{\frac1{2}}} \frac1{d^m}A\of{\frac{x}{d^m}}+ \sum_{ x^{\frac12}< d^m\leq x} \frac1{d^m}A\of{\frac{x}{d^m}}. 
	\end{align}
	
	For the first sum, given any $\ve>0$, there exists some $K>0$ such that $|A(x)|<\frac{\ve}{\zeta(m)}$ for all $x>K$. Then for any $x>K^2$, we have $\frac{x}{d^m}\geq x^{\frac12}>K$ for $d^m\leq x^{\frac12}$. So
	\begin{equation}\label{pntlemeq1-0}
		\abs{\sum_{d^m\leq x^{\frac1{2}}} \frac1{d^m}A\of{\frac{x}{d^m}}}\leq \sum_{d^m\leq x^{\frac1{2}}} \frac1{d^m} \cdot\frac{\ve}{\zeta(m)}< \sum_{d=1}^\infty \frac1{d^m} \cdot\frac{\ve}{\zeta(m)}=\ve.
	\end{equation}
This implies that 
	\begin{equation}\label{pntlemeq1}
		\sum_{d^m\leq x^{\frac1{2}}} \frac1{d^m}A\of{\frac{x}{d^m}}=o(1).
	\end{equation}
	
	For the second sum, notice that $A(x)=O(1)$ due to $A(x)=o(1)$. We have that
	\begin{equation}\label{pntlemeq2}
		\sum_{ x^{\frac12}< d^m\leq x} \frac1{d^m}A\of{\frac{x}{d^m}} =O\Big(\sum_{ x^{\frac1{2m}}< d\leq x^{\frac1m}} \frac1{d^m}\Big)=O\big(x^{-\frac{m-1}{2m}}\big)
	\end{equation}
Thus, (\ref{pntlemeq0}) follows by combining (\ref{pntlemeq}), (\ref{pntlemeq1}) and (\ref{pntlemeq2}) together.

   Now, assume that (\ref{pntlemeq0}) holds. First, by the definition of $\lambda_m$, we have $\frac{1}{\zeta(s)}=\frac{1}{\zeta(ms)}\sum_{n=1}^\infty\frac{\lambda_m(n)}{n^s}$ for $\Re s>1$.  Computing the Dirichlet series expansions of this identity and then comparing the coefficients of $n^{-s}$ on both sides, we obtain that
   \begin{equation}\label{muld}
   	\mu(n)=\sum_{d^m|n}\mu(d)\lambda_m\of{\frac{n}{d^m}}.
   \end{equation}
   Then similar to (\ref{pntlemeq}) above, we divide the partial sum of (\ref{pntmu}) into two parts:
   \begin{equation}\label{ldmu-2}
   	\sum_{n\leq x}\frac{\mu(n)}{n}=\sum_{d^m\leq x^{\frac1{2}}} \frac{\mu(d)}{d^m}L\of{\frac{x}{d^m}}+ \sum_{ x^{\frac12}< d^m\leq x} \frac{\mu(d)}{d^m}L\of{\frac{x}{d^m}},
   \end{equation}
   where $L(x)=\sum_{n\leq x}\frac{\lambda_m(n)}{n}$. The two sums on the right side of (\ref{ldmu-2}) are of $o(1)$ by the similar argument of (\ref{pntlemeq1}) and (\ref{pntlemeq2}) due to $|\mu(n)|\leq 1$ for all $n$, and (\ref{pntmu}) follows. This completes the proof.
\end{proof}

\section{Duality of prime factors}\label{dualsec}
\begin{lemma}[Duality Lemma]\label{duality}
	For any arithmetic function $f(n)$ with $f(1)=0$, we have
	\begin{equation}\label{dualeq}
	\sum_{d|n}\lambda_m(d)f(p(d))=-f(P_m(n))
	\end{equation}
	where $p(1)=1$ and $P_m(n)$ is the largest prime factor of $n$ of order $\not\equiv0(\on{mod} m)$ and is $1$ if $n$ is a perfect $m$-th power.
\end{lemma}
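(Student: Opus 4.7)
The plan is to argue by strong induction on $n$, using the multiplicativity of $\lambda_m$ (Lemma~\ref{ldlemma}(1)) and the prime-power formula $\lambda_m(p^b)=\mu(p^{b \bmod m})$ from Lemma~\ref{ldlemma}(3). The base case $n=1$ is immediate since the only divisor is $d=1$, giving $\lambda_m(1)f(p(1))=f(1)=0$, while $P_m(1)=1$ and $-f(P_m(1))=0$.

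For $n>1$, let $p=p(n)$ and write $n=p^a n'$ with $(p,n')=1$. Each divisor of $n$ factors uniquely as $p^b d'$ with $0\leq b\leq a$ and $d'|n'$. When $b\geq 1$, one has $p(p^b d')=p$ because every prime factor of $n'$ exceeds $p$; when $b=0$, $p(p^b d')=p(d')$, and the $d'=1$ contribution dies by the hypothesis $f(1)=0$. Combined with multiplicativity, this splits the sum as
\begin{equation*}
\sum_{d|n}\lambda_m(d)f(p(d))=\sum_{d'|n'}\lambda_m(d')f(p(d'))+f(p)\of{\sum_{b=1}^{a}\lambda_m(p^b)}\of{\sum_{d'|n'}\lambda_m(d')}.
\end{equation*}

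I would then evaluate the two inner factors using Lemma~\ref{ldlemma}(3): writing $a=qm+r$ with $0\leq r<m$ and counting contributions from $b\equiv 0,1\pmod m$ shows $\sum_{b=0}^{a}\lambda_m(p^b)$ equals $1$ if $m|a$ and $0$ otherwise, hence $\sum_{b=1}^{a}\lambda_m(p^b)=-1$ precisely when $m\nmid a$. By multiplicativity, $\sum_{d'|n'}\lambda_m(d')$ equals $1$ when $n'$ is a perfect $m$-th power and $0$ otherwise. Together with the inductive hypothesis $\sum_{d'|n'}\lambda_m(d')f(p(d'))=-f(P_m(n'))$, the identity reduces to three cases, each to be reconciled with the definition of $P_m(n)$: (a) if $n'$ is not a perfect $m$-th power, the correction vanishes and $P_m(n)=P_m(n')$ because $p$ is smaller than every prime of $n'$ of exponent not divisible by $m$; (b) if $n'$ is a perfect $m$-th power and $m|a$, then $n$ itself is an $m$-th power, so $P_m(n)=1$ and both sides vanish; (c) if $n'$ is a perfect $m$-th power and $m\nmid a$, the correction equals $-f(p)$ and $P_m(n)=p$, since $p$ is now the only prime of $n$ whose exponent is not divisible by $m$.

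The main delicate point is the final case analysis: I expect the bookkeeping for case (c) to be the subtle one, since it is exactly here that the role of $p$ flips from ``smallest prime divisor of $n$'' to ``largest prime divisor of order not divisible by $m$,'' which is the mechanism producing the duality. Once these three cases are checked, the induction closes and \eqref{dualeq} follows.
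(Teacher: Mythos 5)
Your proof is correct and is essentially the paper's argument in inductive clothing: both decompose the divisor sum by the smallest prime factor, both rely on $\sum_{d\mid n}\lambda_m(d)$ being the indicator of perfect $m$-th powers (evaluated at prime powers and at the cofactor), and both identify the single surviving term as the largest prime whose exponent is not a multiple of $m$. The paper evaluates all the groups $p(d)=p_j$ at once and picks out the index $j_0$ directly, whereas you peel off one prime per induction step; the case analysis you flag as delicate is exactly the paper's observation that $a(p_j^{\alpha_j})-1$ and $a(d_{j+1})$ kill every term except $j=j_0$.
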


\begin{proof} Let $a(n)$ be the function  defined by (\ref{andfn}). By (\ref{lddfn}), we have $\zeta(ms)=\zeta(s)\sum_{n=1}^\infty \frac{\lambda_m(n)}{n^s}$, which implies that $a(n)=\sum_{d|n}\lambda_m(d)$.  Note that $\lambda_m(n)$ is a multiplicative function. 
	Following \cite{a77}, for  $n=p_1^{\alpha_1}\cdots p_r^{\alpha_r}, p_1<\cdots<p_r$, 
	we have
	\begin{align}
		\sum_{d|n}\lambda_m(d)f(p(d))&=\lambda_m(1)f(1)+\sum_{j=1}^rf(p_j)\sum_{{d|n, p(d)=p_j}}\lambda_m(d)\nonumber\\
		&=\sum_{j=1}^rf(p_j)\sum_{k=1}^{\alpha_j}\sum_{e|d_{j+1}}\lambda_m(p_j^ke)\nonumber\\
		&=\sum_{j=1}^rf(p_j)\big(\sum_{k=1}^{\alpha_j}\lambda_m(p_j^k)\big)\sum_{e|d_{j+1}}\lambda_m(e)\nonumber\\
		&=\sum_{j=1}^rf(p_j)\big(a(p_j^{\alpha_j})-1\big)a(d_{j+1})\label{dualpfeq}
	\end{align}
	where  $d_j=p_j^{\alpha_j}p_{j+1}^{\alpha_{j+1}}\cdots p_r^{\alpha_r}$ for $1\leq j\leq r$ and $d_{r+1}=1$.
	
	Let $j_0$ be the largest index $j$ such that $m\nmid \alpha_{j}$. Then $a(p_j^{\alpha_j})=1$ for $j>j_0$, $a(d_{j+1})=1$ for $j\geq j_0$ and $a(d_{j+1})=0$ for $j<j_0$. The sum (\ref{dualpfeq}) turns out to be $-f(p_{j_0})$ and  (\ref{dualeq}) follows.
\end{proof}

\begin{remark}  Similarly, one can prove that for  $n=p_1^{\alpha_1}\cdots p_r^{\alpha_r}, p_1<\cdots<p_r$, 
	$$\sum_{d|n}\lambda_m(d)f(P_m(d))=-\sum_{j=1}^{j_0}f(p_j)d_j(n)$$
	where $d_j(n)=\sum\limits_{\smat{m|\alpha_1,\dots,m|\alpha_{j-1},\\d^m|p_j^{\alpha_j-1}p_{j+1}^{\alpha_{j+1}}\cdots p_r^{\alpha_r}}}1$ and  ${j_0}$  is the first index $j$ such that $m\nmid \alpha_{j}$.
	
\end{remark}

\section{Proof of Theorem \ref{mainthm}}

\begin{theorem}[{\cite[Theorem (1.7)]{ip84}}]\label{orderthm} 
	Let $P(n)$ be the largest prime divisor of $n$.
	Then for $r>-1$,
	\begin{equation}
	\sum_{\smat{n\leq x\\P(n)^2|n}}\frac1{P(n)^r}=
	x\exp\Big\{-(2r+2)^{\frac12}(\log x\log^{(2)} x)^{\frac12}\Big(1+g_r(x)+O\Big(\Big(\frac{\log^{(3)} x}{\log^{(2)} x}\Big)^3\Big)\Big)\Big\}
	\end{equation}
	where $\log^{(k)}x=\log(\log^{(k-1)}x)$ is the $k$-fold iterated natural logarithm of $x$ and  
	$$g_r(x)=\frac{\log^{(3)}x+\log(1+r)-2-\log2}{2\log^{(2)}x}\big(1+\frac2{\log^{(2)}x}\big)-\frac{\big(\log^{(3)}x+\log(1+r)-2\big)^2}{8(\log^{(2)}x)^2}.$$
\end{theorem}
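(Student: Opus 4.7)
The plan is to reduce the sum to a question about smooth numbers and then apply a saddle-point analysis. Writing $n = p^a m$ with $p = P(n)$, $a = v_p(n) \geq 2$, and $P(m) < p$, I would rewrite
$$\sum_{\smat{n\leq x\\P(n)^2|n}}\frac1{P(n)^r} = \sum_p \frac{1}{p^r}\sum_{a \geq 2}\Psi^\ast(x/p^a,\,p),$$
where $\Psi^\ast(y,z) := \#\set{m\leq y : P(m) < z}$ is the usual smooth-number counting function. A first step is to show that the $a \geq 3$ contribution is negligible compared to the $a = 2$ piece; the extra factor $p^{-1}$ in the allowed range for $m$ makes this a routine size estimate.

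For the dominant $a = 2$ term, I would invoke the Hildebrand-Tenenbaum uniform asymptotic $\Psi^\ast(y,z) = y\rho(u)\of{1+o(1)}$, where $\rho$ is the Dickman function and $u = \log y/\log z$, valid in a wide range of $u$. With $y = x/p^2$ and $z = p$ the problem reduces to estimating
$$x\sum_p \frac{\rho(u_p)}{p^{r+2}}, \qquad u_p := \frac{\log x}{\log p} - 2.$$
Substituting $p = e^y$ and treating the sum as a Laplace-type integral in $y$, I would use de Bruijn's asymptotic $\log\rho(u) = -u(\log u + \log^{(2)} u - 1) + O\of{u\log^{(2)} u/\log u}$ for large $u$. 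Solving the saddle equation coming from the action $-(r+2)y - \log\rho(\log x/y - 2)$ yields a saddle $y_0 \sim \of{(\log x\log^{(2)} x)/(2r+2)}^{1/2}$, and back-substitution produces the leading factor $\exp\set{-(2r+2)^{1/2}(\log x\log^{(2)} x)^{1/2}}$; the linear $x$ prefactor survives from the $y$ in $y\rho(u)$.

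The main obstacle is producing the explicit secondary correction $g_r(x)$ with the stated error $O\of{(\log^{(3)} x/\log^{(2)} x)^3}$. This requires pushing the saddle analysis two orders beyond the leading behaviour: a refinement of $\log\rho(u)$ retaining contributions of size $\log^{(2)} u/\log u$ and $(\log^{(2)} u/\log u)^2$, a Laplace expansion keeping quadratic and cubic terms in $y - y_0$, and the secondary shift of the saddle induced by the sub-leading pieces of $\log\rho$. The bookkeeping is delicate because all three sources of corrections must be combined and then re-expressed in the variables $\log x$, $\log^{(2)} x$, $\log^{(3)} x$ to match the stated formula for $g_r(x)$. A useful consistency check is the $r = 0$ case, where the sum counts $n \leq x$ with $P(n)^2 \mid n$ and matches classical asymptotics going back to Erd\H{o}s.
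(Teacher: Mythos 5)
This statement is not proved in the paper at all: it is quoted verbatim from Ivi\'c and Pomerance \cite{ip84} and used as a black box, so there is no internal proof to compare against. Your outline does track the strategy of the original source --- decompose $n=p^a m$ with $p=P(n)$, $a\geq 2$, $P(m)<p$, reduce to the smooth-number counting function, and optimize a Laplace-type sum using the asymptotics of $\log\rho(u)$ --- and the leading-order saddle computation you describe correctly reproduces the factor $\exp\{-(2r+2)^{1/2}(\log x\log^{(2)}x)^{1/2}\}$.

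However, as a proof the proposal has a genuine gap, which you yourself flag: the entire content of the theorem beyond the leading exponent is the explicit secondary term $g_r(x)$ and the error $O((\log^{(3)}x/\log^{(2)}x)^3)$, and these are exactly the parts you defer as ``delicate bookkeeping'' without carrying them out. Producing $g_r(x)$ requires a two-term refinement of de Bruijn's expansion of $\log\rho(u)$, a matching second-order displacement of the saddle, and the Gaussian width correction, all combined and re-expanded in $\log^{(2)}x$ and $\log^{(3)}x$; none of this is done, so the stated formula is not actually derived. Two smaller points also need attention: (i) the ranges of $p$ far from the saddle, where $u_p$ is either very large or bounded, must be cut off by Rankin-type upper bounds before the uniform $\Psi(y,z)\sim y\rho(u)$ asymptotic can be applied, and one must check that the relative error in that asymptotic (of size $\log(u+1)/\log z$) is admissible against the target error in the exponent; (ii) for $-1<r<0$ the weight $p^{-r}$ grows with $p$, so the tail estimates and the claim that $a\geq3$ is negligible need to be verified uniformly in that range too. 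For the purposes of the present paper none of this matters, since only the $r=0$ case is used (via Corollary \ref{maincor}) and the theorem is cited, not reproved.
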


\begin{corollary}\label{maincor}
	There exists some constant $C_m$ such that
	\begin{equation}\label{maincoreq}
	\sum_{\smat{n\leq x\\P_m(n)\neq P(n)}}1=O(x\exp(-c(\log x\log^{(2)}x)^{\frac12}))
	\end{equation}
	and 
	\begin{equation}\label{maincoreq2}
	\sum_{\smat{n\leq x\\P_m(n)\neq P(n)}}\frac1n=C_m+O(\exp(-c(\log x\log^{(2)}x)^{\frac12})),
	\end{equation}
	where $c>0$ is a positive constant.
\end{corollary}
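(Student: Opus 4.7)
The plan is to reduce both estimates to Theorem~\ref{orderthm} through the key inclusion
\[
\{n\leq x:P_m(n)\neq P(n)\}\subseteq\{n\leq x:P(n)^2\mid n\}.
\]
I would record this as the first step: writing $n=p_1^{\alpha_1}\cdots p_r^{\alpha_r}$ with $p_1<\cdots<p_r$, the definition of $P_m$ gives $P_m(n)=P(n)=p_r$ precisely when $m\nmid\alpha_r$, so $P_m(n)\neq P(n)$ forces $P(n)^m\mid n$, and since $m\geq 2$ this yields $P(n)^2\mid n$. For (\ref{maincoreq}), the desired bound then follows immediately by applying Theorem~\ref{orderthm} at $r=0$: that result gives $\#\{n\leq x:P(n)^2\mid n\}=x\exp(-(\sqrt{2}+o(1))(\log x\log^{(2)}x)^{1/2})$, which dominates $x\exp(-c(\log x\log^{(2)}x)^{1/2})$ for any $0<c<\sqrt{2}$.

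For (\ref{maincoreq2}), I would use partial summation. Setting $N(t):=\#\{n\leq t:P_m(n)\neq P(n)\}$, one has
\[
S(x):=\sum_{\smat{n\leq x\\P_m(n)\neq P(n)}}\frac{1}{n}=\frac{N(x)}{x}+\int_1^x\frac{N(t)}{t^2}\,dt.
\]
The substitution $u=\log t$ shows that $\int_1^\infty N(t)t^{-2}\,dt$ converges, because $\int_0^\infty \exp(-c(u\log u)^{1/2})\,du<\infty$. Hence $S(x)$ has a finite limit as $x\to\infty$, which I would take as the definition of $C_m$. The error $S(x)-C_m$ then equals $N(x)/x-\int_x^\infty N(t)t^{-2}\,dt$; the boundary term is already of the stated size by (\ref{maincoreq}), so only the tail integral remains.

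For the tail, after $u=\log t$, I need to bound $\int_{\log x}^\infty \exp(-c(u\log u)^{1/2})\,du$. My plan is to split the range at $u=2\log x$: over $[\log x,2\log x]$ the integrand is at most $\exp(-c(\log x\log^{(2)}x)^{1/2})$, so that piece contributes at most $\log x\cdot\exp(-c(\log x\log^{(2)}x)^{1/2})$; over $[2\log x,\infty)$ one has $(u\log u)^{1/2}\geq \sqrt{2}(\log x\log^{(2)}x)^{1/2}$, giving a contribution of strictly smaller order. The stray polylog factor $\log x$ is then absorbed by passing from $c$ to any strictly smaller $c'>0$, using $\log x\leq \exp((c-c')(\log x\log^{(2)}x)^{1/2})$ for large $x$. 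I expect this tail bookkeeping to be the only (quite minor) technical step; the rest is a direct substitution into Theorem~\ref{orderthm}.
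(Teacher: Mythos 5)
Your proposal is correct and follows essentially the same route as the paper: equation (\ref{maincoreq}) comes from the $r=0$ case of Theorem~\ref{orderthm} via the inclusion $\{P_m(n)\neq P(n)\}\subseteq\{P(n)^2\mid n\}$, and (\ref{maincoreq2}) follows by partial summation with $C_m=\int_1^\infty e(t)t^{-2}\,dt$. You in fact supply details the paper leaves implicit (the justification of the inclusion and the tail-integral estimate), and these are carried out correctly.
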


\begin{proof} 
	Equation (\ref{maincoreq}) follows by the case $r=0$ in Theorem \ref{orderthm}. 
	
	Put $e(x)=	\sum\limits_{\smat{n\leq x\\P_m(n)\neq P(n)}}1$. Then (\ref{maincoreq2}) can be deduced by (\ref{maincoreq}) as follows
	$$
	\sum_{\smat{n\leq x\\P_m(n)\neq P(n)}}\frac1n=\int_1^x\frac{de(t)}{t}=\left.\frac{e(t)}{t}\right|_1^x+\int_1^x\frac{e(t)dt}{t^2}=C_m-\int_x^\infty\frac{e(t)dt}{t^2}+\frac{e(x)}{x},
	$$
	where $C_m=\int_1^\infty\frac{e(t)dt}{t^2}$.
\end{proof}

\begin{remark}
	Due to this corollary, $P_m(n)$ inherits a lot of properties of $P(n)$. For example, one can get a version of Theorem \ref{orderthm} for $P_m(n)$. Another example we would like to mention is  that  $P_m(n)$ is equi-distributed $(\on{mod} k)$ for $k\geq2$ by Theorem 1 in \cite{a77}.
\end{remark}

Now we  prove Theorem \ref{mainthm} by showing the following theorem.

\begin{theorem} \label{mainthm2}
	Under the notation and assumptions of Theorem \ref{mainthm}, we have
	\begin{equation}\label{maineq2}
	-\sum_{\smat{2\leq n \leq x\\ \left[\frac{K/\Q}{p(n)}\right]=C}}\frac{\lambda_m(n)}{n}=\frac{|C|}{|G|}+O\big(\exp(-c(\log x)^{\frac13})\big),
	\end{equation}
	where $c$ is a positive constant.
\end{theorem}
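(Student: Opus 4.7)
My plan is to deduce Theorem~\ref{mainthm2} from Dawsey's formula~(\ref{dawsey}) via a M\"{o}bius-inversion argument that compares the $\lambda_m$-partial sum with the $\mu$-partial sum, using Corollary~\ref{maincor} to control the discrepancy caused by $P_m(d)\neq P(d)$.

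Let $f(q)$ be the indicator of $\left[\frac{K/\Q}{q}\right]=C$ for primes $q$, with $f(1):=0$. M\"{o}bius inversion applied to Lemma~\ref{duality} gives $\lambda_m(n)f(p(n))=-\sum_{d|n}\mu(n/d)f(P_m(d))$, and Alladi's classical duality for $\mu$ gives the analogous identity with $\lambda_m,P_m$ replaced by $\mu,P$. Dividing by $n$, summing over $2\leq n\leq x$, and interchanging the order of summation (writing $n=de$) yields
\[
-\sum_{\smat{2\leq n\leq x\\ \left[\frac{K/\Q}{p(n)}\right]=C}}\frac{\lambda_m(n)}{n}=\sum_{d\leq x}\frac{f(P_m(d))\,M(x/d)}{d},
\]
where $M(y):=\sum_{k\leq y}\mu(k)/k$, together with the same identity with $P_m$ replaced by $P$ throughout (for the $\mu$-partial sum).

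Subtracting the two identities, the resulting difference is supported on $\{d\leq x:P_m(d)\neq P(d)\}$, and I would estimate it by splitting at a threshold $T$. For $d\leq T$, I apply the unconditional bound $|M(y)|=O(\exp(-c\sqrt{\log y}))$ (equivalent to PNT) together with the convergent series $\sum_{d:\,P_m(d)\neq P(d)}1/d\leq C_m$ from~(\ref{maincoreq2}); for $d>T$, I use $|M(y)|=O(1)$ together with the tail estimate $\sum_{T<d\leq x,\,P_m(d)\neq P(d)}1/d=O(\exp(-c\sqrt{\log T\cdot\log^{(2)}T}))$, again from~(\ref{maincoreq2}). Balancing by choosing $T$ so that $\log(x/T)\asymp\log T\cdot\log^{(2)}T$ yields an overall discrepancy of order $O(\exp(-c\sqrt{\log x}))$, which is comfortably smaller than $\exp(-c(\log x)^{1/3})$. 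Combining this with an effective version of Dawsey's theorem---which provides the asymptotic $|C|/|G|+O(\exp(-c(\log x)^{1/3}))$ for the $\mu$-partial sum---gives~(\ref{maineq2}).

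The main obstacle will be securing an effective form of Dawsey's Theorem with the stated error rate (presumably via a short partial-summation argument from the effective Chebotarev density theorem, applied to the distribution of the largest prime factor); once that input is available, the threshold-splitting optimization above is essentially routine, and the sparsity input from Corollary~\ref{maincor} ensures the comparison with the $\mu$-partial sum loses nothing beyond the target error.
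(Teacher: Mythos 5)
Your proposal is correct and follows essentially the same route as the paper: Möbius inversion of the Duality Lemma, comparison with the $\mu$-sum via the set $\{d: P_m(d)\neq P(d)\}$ controlled by Corollary~\ref{maincor} and the PNT bound on $\sum_{k\leq y}\mu(k)/k$, and finally Dawsey's effective formula as the external input. The only difference is cosmetic: you split the double sum at a general threshold $T$, whereas the paper uses the hyperbola splitting at $x^{1/2}$; both yield a discrepancy well inside the target error term.
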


\begin{proof} Here we follow the ideas in the proof 2 of  \cite[Theorem 4]{a77} and the proof of  \cite[Theorem 1]{d17}. 
	
	Let $f(n)$ be an arithmetic function defined by
	$$f(n)=\left\{ \begin{matrix}
	1,& \text{if } \left[\frac{K/\Q}{p}\right]=C, n=p>1;\\
	0,& \text{otherwise}.
	\end{matrix}\right.$$
	Then 
	$$\sum_{\smat{2\leq n \leq x\\ \left[\frac{K/\Q}{p(n)}\right]=C}}\frac{\lambda_m(n)}{n}=\sum_{n\leq x}\frac{\lambda_m(n)f(p(n))}{n}.$$
	
	As \cite[(2.35)]{a77}, by the M\"{o}bius inversion formula and the Duality Lemma \ref{duality} we have
	\begin{align}
		\quad\sum_{n\leq x}\frac{\lambda_m(n)f(p(n))}{n}&=-\sum_{n\leq x}\frac1n\sum_{d|n}\mu(\frac{n}d)f(P_m(d))=-\sum_{nd\leq x}\frac{\mu(n)}{n}\cdot\frac{f(P_m(d))}{d}\nonumber\\
		&=-\sum_{n\leq x^{\frac12}}\frac{\mu(n)}{n}\sum_{d\leq\frac{x}{n}}\frac{f(P_m(d))}{d}-\sum_{n< x^{\frac12}}\frac{f(P_m(n))}{n}\sum_{x^{\frac12}<d\leq\frac{x}{n}}\frac{\mu(d)}{d}
	\end{align}
	It follows that the difference between the partial sums on $\lambda_m$ and $\mu$ is
	\begin{align}\label{pfeq1}
		&	\sum_{\smat{2\leq n \leq x\\ \left[\frac{K/\Q}{p(n)}\right]=C}}\frac{\lambda_m(n)}{n}-\sum_{\smat{2\leq n \leq x\\ \left[\frac{K/\Q}{p(n)}\right]=C}}\frac{\mu(n)}{n}=-\sum_{n\leq x^{\frac12}}\frac{\mu(n)}{n}\sum_{d\leq\frac{x}{n}}\frac{f(P_m(d))-f(P(d))}{d}\nonumber\\
		&\qquad\qquad\qquad\qquad\quad-\sum_{n< x^{\frac12}}\frac{f(P_m(n))-f(P(n))}{n}\sum_{x^{\frac12}<d\leq\frac{x}{n}}\frac{\mu(d)}{d}=S_1+S_2
	\end{align}
	
	For $S_2$, by  \cite[(2.24)]{a77} we have
	\begin{equation}\label{pfeq0}
	\sum_{n\leq x^{\frac12}}\frac{\mu(n)}{n}=O\big(\exp(-c(\log x)^{\frac12})\big),
	\end{equation}
	and so we get that
	\begin{equation}
	\sum_{x^{\frac12}<d\leq\frac{x}{n}}\frac{\mu(d)}{d}=O\Big(\exp\big(-c(\log \frac{x}{n})^{\frac12}\big)\Big).
	\end{equation}
	As \cite[(2.27)]{a77}, this implies that
	\begin{equation}\label{pfeq2}
	S_2=O\Big(\sum_{n< x^{\frac12}}\frac{1}{n}\exp\big(-c(\log \frac{x}{n})^{\frac12}\big)\Big)=O\big(\exp(-c(\log x)^{\frac12})\big)
	\end{equation}
	
	For $S_1$, by  (\ref{maincoreq2}) in Corollary \ref{maincor}, 
	\begin{equation}
	\sum_{d\leq\frac{x}{n}}\frac{f(P_m(d))-f(P(n))}{d}=C_m+O\big(\exp(-c(\log \frac{x}{n})^{\frac12})\big).
	\end{equation}
	Similar to (\ref{pfeq2}) and by (\ref{pfeq0}) again, we get that
	\begin{equation}\label{pfeq3}
	S_1=-C_m\sum_{n\leq x^{\frac12}}\frac{\mu(n)}{n}+O\big(\exp(-c(\log x)^{\frac12})\big)=O\big(\exp(-c(\log x)^{\frac12})\big).
	\end{equation}
	
	Thus, (\ref{maineq2}) follows by combining (\ref{pfeq1}), (\ref{pfeq2}), (\ref{pfeq3}) and \cite[(10)]{d17} together.
\end{proof}

\begin{remark}	
	Similar to the proof of Theorem \ref{mainthm2}, one can also prove the analogues of formula  (\ref{classical}) and (\ref{maineq}) for  functions $(-1)^{\omega(n)}$ and $(-1)^{A(n)}$, where $\omega(n)=\sum_{p^\alpha||n}1$  is the prime divisor counting function and  $A(n)=\sum_{p^\alpha||n}\alpha p$ is the additive prime divisor function which was introduced by Alladi and Erd\"{o}s \cite{ae77} in 1977. This is mainly due to the Duality Lemma \ref{duality} with respect to $(-1)^{\omega(n)}$ and $(-1)^{A(n)}$ holds for the numbers $n$ satisfying $P(n)||n$ and  $P(n)\geq3$. 
\end{remark}

\section*{Acknowledgments}

The author would like to thank his advisor Professor Xiaoqing Li for recommending the article which leads him to write down this note. The author would also like to thank the anonymous referee for the very detailed comments, corrections and valuable suggestions, which improve this note a lot.

\end{document}